\tikzstyle{vert}=[circle,draw=blue!50,fill=blue!20,thick]
\tikzstyle{dot}=[circle,draw,radius=1mm,thick]
\tikzstyle{arw}=[->,shorten <=1pt,>=angle 90,semithick]
\newcommand{\m}[1]{{\mathbf{\uppercase{#1}}}}
\newcommand{\bigO}{O}
\def\compNP{{\textsf{NP}}}
\def\compEXPTIME{{\textsf{EXPTIME}}}
\def\compP{{\textsf{P}}}
\newcommand{\minority}{$\textnormal
  {\textsf{Minority\/}}^{\textnormal{\textrm{Id}}}$}
\newcommand{\smp}{\textnormal{\textsf{SMP}}}
\newcommand{\smpun}{$\smp^{\textnormal{\textrm{Un}}}$}
\theoremstyle{plain}
\newtheorem{theorem}{Theorem}
\newtheorem{observation}[theorem]{Observation}
\newtheorem{corollary}[theorem]{Corollary}
\newtheorem{proposition}[theorem]{Proposition}
\newtheorem{claim}[theorem]{Claim}
\theoremstyle{definition}
\newtheorem{definition}[theorem]{Definition}
\newtheorem*{open-problem*}{Open problem}
\theoremstyle{remark}
\begin{document}

\bibliographystyle{plain}

%

  \author{Alexandr Kazda}
  \address{Charles University, Prague, Czech Republic}
  \email{alex.kazda@gmail.com}

  \author{Jakub Opr\v{s}al}
  \address{University of Durham, Durham, UK}
  \email{jakub.oprsal@durham.ac.uk}

  \author{Matt Valeriote}
  \address{McMaster University, Hamilton, Ontario, Canada}
  \email{matt@math.mcmaster.ca}

  \author{Dmitriy Zhuk}
  \address{Charles University, Prague, Czech Republic
    \and Lomonosov Moscow State University, Moscow, Russia}
  \email{zhuk@intsys.msu.ru}

  \title{Deciding the existence of minority terms}

  \hypersetup{
    pdftitle={Deciding the existence of minority terms},
    pdfauthor={Alexandr Kazda, Jakub Oprsal, Matt Valeriote, and Dmitriy Zhuk},
    pdfborder=0 0 0,
  }

  \subjclass[2010]{Primary 68Q25, Secondary 03B05, 08A40}

  \begin{abstract}
    This paper investigates the computational complexity of deciding if a~given
    finite idempotent algebra has a~ternary term operation $m$ that satisfies
    the minority equations $m(y,x,x) \approx m(x,y,x) \approx m(x,x,y) \approx
    y$. We show that a~common polynomial-time approach to testing for this type
    of condition will not work in this case and that this decision problem lies
    in the class \compNP.
  \end{abstract}

  \thanks{The first author was supported by the Charles University grants
  PRIMUS/SCI/12 and UNCE/SCI/22. The second author was supported by European Research Council (Grant Agreement no.\ 681988, CSP-Infinity) and UK EPSRC (Grant EP/R034516/1), and the third author was supported by the Natural Sciences and Engineering Council of Canada.}

  \maketitle

%

\section{Introduction}

It is not difficult to see that for the 2-element group $\mathbb{Z}_2 =
\langle\{0,1\}, + \rangle$, the term operation $m(x,y,z) = x + y + z$ satisfies
the equations
\begin{equation}\label{min-eq}
  m(y,x,x) \approx m(x,y,x) \approx m(x,x,y) \approx y.
\end{equation}
A slightly more challenging exercise is to show that a~finite Abelian group
will have such a~term operation if and only if it is isomorphic to a~Cartesian
power of~$\mathbb{Z}_2$.

A ternary operation $m(x,y,z)$ on a~set $A$ is called a~\emph{minority
operation on $A$} if it satisfies the identities~(\ref{min-eq}).  A ternary
term $t(x,y,z)$ of an algebra $\m a$ is a~\emph{minority term of $\m a$} if its
interpretation as an operation on $A$, $t^{\m a}(x,y,z)$, is a~minority
operation on $A$.  Given a~finite algebra $\m a$, one can decide if it has
a~minority term by constructing all of its ternary term operations and checking
to see if any of them satisfy the equations~(\ref{min-eq}).  Since the set of
ternary term operations of $\m a$ can be as big as $|A|^{|A|^3}$, this
procedure will have a~runtime that in the worst case will be exponential in the
size of~$\m a$.

In this paper we consider the computational complexity of testing for the
existence of a~minority term for finite algebras that are  {idempotent}.  An
$n$-ary operation $f$ on a~set $A$ is \emph{idempotent} if it satisfies the
equation $f(x, x, \dots, x) \approx x$ and an algebra is \emph{idempotent} if
all of its basic operations are. We observe that every minority operation is
idempotent. While idempotent algebras are rather special, one can always form
one by taking the \emph{idempotent reduct} of a~given algebra $\m a$.  This is
the algebra with universe $A$ whose basic operations are all of the idempotent
term operations of $\m a$.  It turns out that many important properties of an
algebra and the variety that it generates are governed by its idempotent
reduct~\cite{kearnes-kiss-book}.

The condition of an algebra having a~minority term is an example of a~more
general existential condition on the set of term operations of an algebra
called a~\emph{strong Maltsev condition}.  Such a~condition consists of
a~finite set of operation symbols along with a~finite set of equations
involving them.  An algebra is said to satisfy the condition if for each
$k$-ary operation symbol from the condition, there is a~corresponding $k$-ary
term operation of the algebra so that under this correspondence, the equations
of the condition hold.  For a~more careful and complete presentation of this
notion and related ones, we refer the reader to~\cite{Garcia-Taylor}.

Given a~strong Maltsev condition $\Sigma$, the problem of determining if
a~finite algebra satisfies $\Sigma$ is decidable and lies in the complexity
class \compEXPTIME.  As in the minority term case, one can construct all term
operations of an algebra up to the largest arity of an operation symbol in
$\Sigma$ and then check to see if any of them can be used to witness the
satisfaction of the equations of $\Sigma$.  In general, we cannot do any better
than this, since for some strong Maltsev conditions, it is known that the
corresponding decision problem is {\compEXPTIME}-complete~\cite{freese-valeriote}.

The situation for finite idempotent algebras appears to be better than in the
general case since there are a~number of strong Maltsev conditions for which
there are polynomial-time  procedures to decide if a~finite  idempotent algebra
satisfies them~\cite{freese-valeriote, horowitz-ijac, kazda-valeriote}.  At
present there is no known characterization of these strong Maltsev conditions
and we hope that the results of this paper may help to lead to a~better
understanding of them.  We refer the reader to~\cite{Bu-Sa} or
to~\cite{bergman-book} for background on the basic algebraic notions  and
results used in this work.

\section{Formulation of the problem}

In this section, we formally introduce the considered problem. In all the
problems mentioned in the introduction, we assume that the input algebra is
given as a list of tables of its basic operations. In particular, this implies
that the input algebra has finitely many operations. We also assume that the
input algebra has at least one operation (i.e., the input is non-empty) and we
forbid nullary operations on the input.
The main concern of this paper is the following decision problem.

\begin{definition}
Define \minority\ to be the following decision problem:
\begin{itemize}
  \item INPUT: A~list of tables of basic operations of an idempotent algebra~$\m A$.
  \item QUESTION: Does $\m a$ have a~minority term?
\end{itemize}
\end{definition}

The size of an input is measured by the following formula. For a finite
algebra~$\m a$, let
\[
  \|\m a\| = \sum_{i = 1}^\infty k_i|A|^i,
\]
where $k_i$ is the number of $i$-ary basic operations of $\m a$. Since we
assume that $\m a$ has only finitely many operations, the sum is finite. Also
note that $\lVert \m a\rVert \geq \lvert A \rvert$ since we assumed that $\m a$
has a non-nullary operation.

\section{Minority is a join of two weaker conditions}
  \label{join}

One approach to understanding the minority term condition is to see if maybe
there exist two weaker Maltsev conditions $\Sigma_1$ and $\Sigma_2$ such that a
finite algebra $\m A$ has a minority term if and only if $\m a$ satisfies both
$\Sigma_1$ and $\Sigma_2$. In this situation, we would say that the minority
term condition is the join of $\Sigma_1$ and $\Sigma_2$. Were this the case, we
could decide if $\m a$ has a minority term by deciding $\Sigma_1$ and
$\Sigma_2$.

On the surface, the minority term condition is already quite concise and
natural; it is not clear if having a minority term can be expressed as a join
of weaker conditions. In this section, we show that it is a join of having a
Maltsev term with a condition which we call having a minority-majority term
(not to be confused with the `generalized minority-majority' terms
from~\cite{GMM-paper}). Maltsev terms are a classical object of study in
universal algebra -- deciding if an
algebra has them is in \compP{} for finite idempotent algebras. The
minority-majority terms are much less understood.

\begin{definition}
  A ternary term $p(x,y,z)$ of an algebra $\m a$ is a~\emph{Maltsev term for
  $\m a$} if it satisfies the equations
  \[
    p(x,x,y)\approx p(y,x,x)\approx y
  \]
  and a~6-ary term $t(x_1, \dots, x_6)$ is a~\emph{minority-majority term} of
  $\m a$ if it satisfies the equations
  \begin{align*}
    t(y,x,x,z,y,y)&\approx y\\
    t(x,y,x,y,z,y)&\approx y\\
    t(x,x,y,y,y,z)&\approx y.
  \end{align*}
\end{definition}

We point out that if an algebra has a~minority term then it also, trivially,
has a~Maltsev term, but that the converse does not hold (as witnessed by the
cyclic group $\mathbb{Z}_4$).  Our definition of a~minority-majority term is
a~strengthening of the term condition found by Ol\v{s}\'{a}k
in~\cite{Olsak2017}. Ol\v{s}\'{a}k has shown that his terms are a~weakest
non-trivial strong Maltsev condition whose terms are all idempotent.

We observe that by padding variables, any algebra that has a~minority term or
a~majority term (just replace the final occurrence of the variable $y$ in the
equations~(\ref{min-eq}) by the variable $x$ to define such a~term) also has
a~minority-majority term.  Since the 2-element lattice has a~majority term but
no minority term, it follows that having a~minority-majority term is strictly
weaker than having a~minority term.

\begin{theorem}\label{thm:join}
  An algebra has a~minority term if and only if it has a~Maltsev term and
  a~minority-majority term.
\end{theorem}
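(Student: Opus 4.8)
The forward implication requires nothing new. If $m$ is a minority term of $\m a$, then $p:=m$ is already a Maltsev term, and padding — say $t(x_1,\dots,x_6):=m(x_1,x_2,x_3)$ — gives a minority-majority term, exactly as observed just before the theorem. So the content is in the converse.

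For the converse I would assume $\m a$ has a Maltsev term $p$ (with $p(x,x,y)\approx p(y,x,x)\approx y$) and a minority-majority term $t$, and then write down a single explicit composite term and show it is forced to be a minority term. The natural shape to try is $m(x,y,z)=t\bigl(x,y,z,f_1(x,y,z),f_2(x,y,z),f_3(x,y,z)\bigr)$, and then choose the $f_i$ so that each of the three defining substitutions of $m$ lands exactly on one of the three defining equations of $t$. Comparing coordinates, this forces the requirements $f_1(x,y,x)\approx f_1(x,x,y)\approx y$, $f_2(y,x,x)\approx f_2(x,x,y)\approx y$, and $f_3(y,x,x)\approx f_3(x,y,x)\approx y$, with the remaining substitution of each $f_i$ unconstrained (because the $i$-th equation for $t$ holds for every value of its ``$z$''-slot). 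Each of these three two-equation conditions is met by a mere permutation of the arguments of the Maltsev term, namely $f_1(x,y,z)=p(y,x,z)$, $f_2(x,y,z)=p(x,y,z)$, and $f_3(x,y,z)=p(x,z,y)$, using $p(x,x,y)\approx p(y,x,x)\approx y$.

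Thus I would set
\[
  m(x,y,z):=t\bigl(x,\,y,\,z,\,p(y,x,z),\,p(x,y,z),\,p(x,z,y)\bigr)
\]
and verify the minority equations directly: substituting $x=y$ into $m(y,x,x)$ collapses the last three arguments to $p(x,y,x),y,y$, so $m(y,x,x)=y$ by the first equation for $t$; substituting into $m(x,y,x)$ gives last three arguments $y,p(x,y,x),y$, so $m(x,y,x)=y$ by the second equation; and $m(x,x,y)$ gives $y,y,p(x,y,x)$, so $m(x,x,y)=y$ by the third. There is essentially no obstacle beyond discovering the correct argument orderings; the only care needed is the bookkeeping that aligns each of the three cases with the intended equation for $t$. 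Note also that idempotence of $m$ is automatic from the minority equations and is not used as a hypothesis, so the argument applies to arbitrary algebras, not only idempotent ones.
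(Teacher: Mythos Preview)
Your proof is correct and follows the same approach as the paper: the paper also sets $m(x,y,z)=t(x,y,z,p(z,x,y),p(x,y,z),p(y,z,x))$ and verifies the three equations directly. The only difference is cosmetic—you use the argument orderings $p(y,x,z),\,p(x,y,z),\,p(x,z,y)$ in slots $4$--$6$, which works equally well since each slot is unconstrained in exactly one of the three substitutions.
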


\begin{proof}
The discussion preceding this theorem establishes one direction of this
theorem.  For the other we need to show that if an algebra $\m a$ has a~Maltsev
term  $p(x,y,z)$,  and a~minority-majority term $t(x_1, \dots, x_6)$ then $\m
a$ has a~minority term.  Given such an algebra $\m a$, define
\[
  m(x,y,z)=t(x,y,z,p(z,x,y),p(x,y,z),p(y,z,x)).
\]
Verifying that $m(x,y,z)$ is a~minority term for $\m a$  is
straightforward; we show one of the three required equalities here as an example:
\begin{align*}
  m(x,x,y)&\approx t(x,x,y,p(y,x,x),p(x,x,y),p(x,y,x))\\
  &\approx t(x,x,y,y,y,p(x,y,x))\approx y.
\end{align*}
\end{proof}

\begin{corollary}
  The problem of deciding if a~finite algebra has a~minority term can be
  reduced to the problems of deciding if it has a~Maltsev term and if it has
  a~minority-majority term.
\end{corollary}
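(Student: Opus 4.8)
The plan is to read off the reduction directly from Theorem~\ref{thm:join}. That theorem tells us that a finite idempotent algebra $\m a$ has a minority term if and only if it simultaneously has a Maltsev term and a minority-majority term. Consequently, the ``obvious'' many-one reduction is simply the identity map on inputs: given a list of basic-operation tables for $\m a$, feed the very same list to a decision procedure for ``has a Maltsev term'' and to a decision procedure for ``has a minority-majority term'', and answer ``yes'' exactly when both subproblems answer ``yes''. Correctness is immediate from Theorem~\ref{thm:join}, and the reduction is trivially polynomial-time (indeed logspace) since it does not modify the input at all.

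The only point that needs a word of care is that the statement of the corollary refers to an arbitrary \emph{finite algebra}, whereas Theorem~\ref{thm:join} as proved applies to algebras in general but is of interest here for idempotent ones. Since every minority operation, every Maltsev operation, and every minority-majority operation is idempotent (each is forced to satisfy $f(x,x,\dots,x)\approx x$ by its defining equations), the presence of any one of these terms already constrains $\m a$; in particular the equivalence in Theorem~\ref{thm:join} holds verbatim for an arbitrary finite algebra, so no idempotency hypothesis is lost in passing to the corollary. Thus the reduction described above is valid without restriction.

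I would phrase the proof as two short sentences: first cite Theorem~\ref{thm:join} to get the biconditional, then observe that ``answer the conjunction of the two subproblems on the unchanged input'' is a (trivial) reduction. I do not anticipate any real obstacle here; the one thing to be careful about is not to claim more than a Turing/conjunctive-truth-table reduction, since the minority problem is being reduced to \emph{two} problems rather than one, and to make sure the wording matches the earlier remark that Maltsev is in \compP{} for finite idempotent algebras while minority-majority is not known to be.

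\begin{proof}
  By Theorem~\ref{thm:join}, a finite algebra $\m a$ has a minority term if and
  only if it has both a Maltsev term and a minority-majority term. Hence the
  algorithm that, on input $\m a$, runs a decision procedure for the Maltsev
  term problem and a decision procedure for the minority-majority term problem
  (on the same, unmodified input $\m a$) and accepts precisely when both accept,
  decides whether $\m a$ has a minority term. This is a polynomial-time
  (conjunctive) reduction from the minority term problem to the Maltsev term
  problem and the minority-majority term problem.
\end{proof}
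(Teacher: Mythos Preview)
Your proposal is correct and matches the paper's approach: the corollary is stated without proof in the paper, as it is an immediate consequence of Theorem~\ref{thm:join}, exactly as you argue. Your digression about idempotency is unnecessary, since Theorem~\ref{thm:join} is already stated and proved for arbitrary algebras (not just idempotent ones), but it does no harm.
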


As was demonstrated in~\cite{freese-valeriote, horowitz-ijac}, there is
a~polynomial-time algorithm to decide if a~finite idempotent algebra has
a~Maltsev term. Therefore, should testing for a~minority-majority term for
finite idempotent algebras prove to be tractable, then this would lead to
a~fast algorithm for testing for a~minority term, at least for finite
idempotent algebras.  From the hardness results found
in~\cite{freese-valeriote} it follows that in general, the problem of deciding
if a~finite algebra has a~minority-majority term is \compEXPTIME-complete; the
complexity of this problem restricted to idempotent algebras is unknown.

\section{Local Maltsev terms}
  \label{maltsev}

In~\cite{freese-valeriote, horowitz-ijac,  kazda-valeriote,
Valeriote_Willard_2014} polynomial-time algorithms are presented for deciding
if certain Maltsev conditions hold in the variety generated by a~given finite
idempotent algebra.  One particular Maltsev condition that is addressed by all
of these papers is that of having a~Maltsev term.  In all but
\cite{freese-valeriote}, the polynomial-time algorithm produced is based on
testing for the presence of enough `local' Maltsev terms in the given
algebra.

\begin{definition}
  Let $\m a$ be an algebra and $S \subseteq A^2\times\{0,1\}$.  A term
  operation $t(x,y,z)$ of $\m a$ is a~\emph{local Maltsev term operation for
  $S$} if:
  \begin{itemize}
    \item whenever $((a,b), 0) \in S$, $t(a,b,b) = a$, and
    \item whenever $((a,b), 1) \in S$, $t(a,a,b) = b$.
  \end{itemize}
\end{definition}

Clearly, if $\m a$ has a~Maltsev term then it has a~local Maltsev term
operation for every subset $S$ of $A^2 \times\{0,1\}$ and conversely, if $\m a$
has a~local Maltsev term operation for $S = A^2 \times\{0,1\}$ then it has
a~Maltsev term. In~\cite{horowitz-ijac,  kazda-valeriote,
Valeriote_Willard_2014} it is shown that if a~finite idempotent algebra $\m a$
has local Maltsev term operations for all two element subsets of $A^2
\times\{0,1\}$ then $\m a$ will have a~Maltsev term. This fact is then used as
the basis for a~polynomial-time test to decide if a~given finite idempotent
algebra has a~Maltsev term.

In this section we extract an additional piece of information from this
approach to testing for a~Maltsev term, namely that if a~finite idempotent
algebra has a~Maltsev term, then we can produce an operation table or a~circuit
for a~Maltsev term operation in time polynomial in the size of the algebra.
 We will first prove that there is an
algorithm for producing circuits for a Maltsev function; the algorithm for
producing the operation table will then be given as a~corollary. However, for
the reduction presented in Section~\ref{sec:np} we need only the algorithm
for producing a function table.

%

Let us first briefly describe how to get a~global Maltsev operation from local
ones. Assume we know (circuits of) a~local Maltsev term operation
$t_{a,b,c,d}(x,y,z)$ for each two element subset
\[
  \{((a,b), 0), ((c,d), 1)\}
\]
of $A^2 \times\{0,1\}$. These are required for $\m A$ to have a~Maltsev term.
A global Maltsev term can be constructed from them in two stages: First, we construct, for each $a,b\in
A$, an operation $t_{a,b}$ such that $t_{a,b}(a,b,b) = a$ and $t_{a,b}(x,x,y) = y$ for all
$x,y \in A$. This is done by fixing an enumeration $(a_1, b_1)$, $(a_2,
b_2)$, \dots, $(a_{n^2}, b_{n^2})$ of $A^2$, and then defining, for $1 \le j
\le n^2$, the operation $t_{a,b}^j(x,y,z)$ on $A$ inductively as follows:
\begin{itemize}
  \item $t_{a,b}^1(x,y,z) = t_{a,b,a_1, b_1}(x,y,z)$, and
  \item for $1 \le j < n^2$, $t_{a,b}^{j+1}(x,y,z) =
    t_{a,b,u,v}(t_{a,b}^j(x,y,z), t_{a,b}^j(y,y,z), z)$, where $u =
    t_{a,b}^j(a_{j+1}, a_{j+1}, b_{j+1})$ and $v = b_{j+1}$.
\end{itemize}
An easy inductive argument shows that  $t_{a,b}^j(a,b,b) = a$ and
$t_{a,b}^j(a_i, a_i, b_i) = b_i$ for all $i \le j \le n^2$, and so setting
$t_{a,b}(x,y,z) = t_{a,b}^{n^2}(x,y,z)$ works.

In the second stage, we construct a~term $t_j(x,y,z)$ such that $t_j(a,a,b) =
b$ for all $a$, $b \in A$ and $t_j(a_i, b_i, b_i) = a_i$ for all $i \le j$.  We
define this sequence of operations inductively again:
\begin{itemize}
  \item $t_1(x,y,z) = t_{a_1, b_1}(x,y,z)$, and
  \item for $1 \le j < n^2$, $t_{j+1}(x,y,z) = t_{u,v}(x, t_j(x,y,y),
    t_j(x,y,z))$, where $u = a_{j+1}$ and $v = t_j(a_{j+1}, b_{j+1},
    b_{j+1})$.
\end{itemize}
Again, it can be shown that for $1 \le j \le n^2$, the operation $t_j(x,y,z)$
satisfies the claimed properties and so $t_{n^2}(x,y,z)$ will be a~Maltsev term
operation for $\m a$.

From the above construction, one can obtain a~term that
 represents a Maltsev term operation of the algebra $\m A$, starting with terms representing the operations $t_{a,b,c,d}$. But there is
an efficiency problem with this approach:
the term is extended by one layer
in each step, which results in a term of exponential size. Therefore, the
bookkeeping of this term would increase the running time of the algorithm
beyond polynomial. Nevertheless, this can be circumvented by constructing
a~succint representation of the term operations, namely by considering circuits
instead of terms.

Informally, a~circuit over an algebraic language (as a~generalization of
logical circuits) is a~collection of gates labeled by operation symbols, where
the number of inputs of each gate corresponds to the arity of the operation
symbol. The inputs are either connected to outputs of some other gate, or
designated as inputs of the circuit; an output of one of the gates is
designated as an~output of the circuit. Furthermore, these connections allow
for straightforward evaluation, i.e., there are no oriented cycles.

Formally, we define an $n$-ary \emph{circuit} in the language of an algebra $\m A$ as a~directed acyclic graph with possibly multiple edges that has two kinds of vertices: \emph{inputs} and \emph{gates}. There are exactly $n$ inputs, labeled by variables $x_1,\dots, x_n$, and each of them is a~source, and a~finite number of gates. Each gate is labeled by an~operation symbol of $\m A$, the in-degree corresponds to the arity of the operation, and the in-edges are ordered. One of the vertices is designated as the \emph{output} of the circuit. We define the size of the circuit to be the number of its vertices.

The value of a~circuit given an input tuple $a_1,\dots,a_n$ is defined by the following
recursive computation: The value on an input vertex labeled by $x_i$ is $a_i$,
the value on a~gate labeled by $g$ is the value of the operation $g^{\m A}$
applied to the values of its in-neighbours in the specified order. Finally, the
output value of the circuit is the value of the output vertex. It is easy
to see that the value of a~circuit on a~given tuple can be computed in linear
time (in the size of the circuit) in a~straightforward way. For a~fixed circuit
the function that maps the input tuple to the output is a~term function of $\m
A$. Indeed, to find such a~term it is enough to evaluate the circuit in the
free (term) algebra on the tuple $x_1,\dots,x_n$. The converse is also true
since any term can be represented as a~`tree' circuit (it is an oriented tree
if we omit all input vertices). Many terms can be expressed by considerably
smaller circuits. We give one such example in
Figure~\ref{fig:term-and-circuit}.

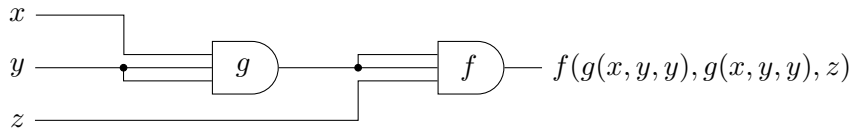
\begin{figure}
  \[ \begin{tikzpicture}[ split/.style = { shape = circle, draw, fill, inner
      sep = 0, minimum size = 2.5 }, circuit logic US ]

    \node [and gate, inputs = {nnn}, point right, minimum size = .7cm] (f) at (3,0) {$f$};
    \node [and gate, inputs = {nnn}, point right, minimum size = .7cm] (g) at (0,0) {$g$};

    \node (x) at (-3,.7) {$x$};
    \node (y) at (-3,0) {$y$};
    \node (z) at (-3,-.7) {$z$};

    \draw (f.output) -- ++(right:.5) node [right] {$f(g(x,y,y),g(x,y,y),z)$} ;

    \draw (g.output) -- (f.input 2);
    \draw ($(g.output)!.5!(f.input 2)$) |- (f.input 1);
    \node [split] at ($(g.output)!.5!(f.input 2)$) {};

    \draw (x.east) -| ($(x.east)!.5!(g.input 1)$) |- (g.input 1);

    \draw (y.east) -- (g.input 2);
    \draw ($(y.east)!.5!(g.input 2)$) |- (g.input 3);
    \node [split] at ($(y.east)!.5!(g.input 2)$) {};

    \draw (z.east) -| ($(g.output)!.5!(f.input 2) + (-90:.5) $) |- (f.input 3);

  \end{tikzpicture} \]
  \caption{A~succinct circuit representation of the term $f(g(x,y,y),g(x,y,y),z)$.}
    \label{fig:term-and-circuit}
\end{figure}

In the proof of the theorem below, we will also use circuits with multiple outputs. The only difference in the definition is that several vertices are designated as outputs. Any such circuit then computes a~tuple of term functions.

\begin{theorem}\label{maltsevcircuit}
Let $\m a$ be a~finite idempotent algebra.  There is an algorithm whose
runtime can be bounded by a~polynomial in the size of $\m a$ that will either
(correctly) output that $\m a$ has no Maltsev term operation, or output
a~circuit for some Maltsev term operation of $\m a$.
\end{theorem}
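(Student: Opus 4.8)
The plan is to follow the two-stage construction sketched above, but to carry it out with circuits (allowing gates to be shared) so that every intermediate object stays polynomial size.

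First I would, for each \emph{mixed} two-element subset $S=\{((a,b),0),((c,d),1)\}$ of $A^2\times\{0,1\}$, decide whether $\m a$ has a local Maltsev term operation for $S$ and, when it does, produce a polynomial-size circuit computing one; the non-mixed two-element subsets and the singletons always admit a projection ($x$ or $z$) as a local Maltsev term, so only the $O(|A|^4)$ mixed subsets need to be checked. The key observation is that a ternary term operation $t$ of $\m a$ satisfies $t(a,b,b)=a$ and $t(c,c,d)=d$ if and only if the pair $(a,d)$ lies in the subuniverse of $\m a^2$ generated by the three ``columns'' $(a,c)$, $(b,c)$, $(b,d)$. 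So I would run the usual subuniverse-generation procedure in $\m a^2$ from those three generators, and for each newly produced pair record a gate that shows how it was obtained from earlier ones, the three generators serving as the circuit inputs $x,y,z$. Since $\|\m a^2\|$ is polynomial in $\|\m a\|$, this runs in polynomial time; if $(a,d)$ is produced then, reinterpreting the resulting circuit over $\m a$, we obtain a circuit $C_{a,b,c,d}$ of polynomial size computing a local Maltsev term operation for $S$. If for some $S$ the pair $(a,d)$ is never produced, the algorithm halts and reports that $\m a$ has no Maltsev term; this is correct because a Maltsev term would restrict to a local Maltsev term for $S$.

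Next I would realize the two stages of the construction as one circuit built up with sharing. For Stage~1 and a fixed $(a,b)\in A^2$ I maintain, inside a single circuit with inputs $x,y,z$, two distinguished gates $P^j$ and $Q^j$ representing $t_{a,b}^j(x,y,z)$ and $t_{a,b}^j(y,y,z)$; to pass from $j$ to $j+1$ I evaluate the sub-circuit with output $P^j$ at $(a_{j+1},a_{j+1},b_{j+1})$ (a linear-time computation) to obtain the element $u$, and then splice in two fresh copies of $C_{a,b,u,b_{j+1}}$, one with its inputs wired to $P^j,Q^j,z$ to become $P^{j+1}$ and one with its inputs wired to $Q^j,Q^j,z$ to become $Q^{j+1}$. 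After $|A|^2$ steps the gate $P^{|A|^2}$ is the output of a polynomial-size circuit $C_{a,b}$ computing the operation $t_{a,b}$. Stage~2 is handled the same way: inside one circuit I keep gates for $t_j(x,y,y)$ and $t_j(x,y,z)$, at step $j$ I evaluate the sub-circuit for $t_j$ at $(a_{j+1},b_{j+1},b_{j+1})$ to read off the element $v$, and I splice in two copies (wired analogously) of the already-built circuit $C_{a_{j+1},v}$; the output gate after $|A|^2$ steps represents a Maltsev term operation of $\m a$, and I output that circuit.

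The inductive verification that the $t_{a,b}^j$, $t_{a,b}$ and $t_j$ have the claimed values is exactly the ``easy inductive argument'' mentioned in the text, and the polynomiality of subuniverse generation in a fixed power is standard, so the one point that genuinely needs care is the bookkeeping that keeps everything of polynomial size. The crucial feature is that each inductive step only \emph{glues together} a bounded number of already-constructed circuits and never needs to inspect their internal structure; it only needs to \emph{evaluate} a current sub-circuit (which is linear in its size) to determine which previously built local circuit to splice in next. Consequently each $C_{a,b,c,d}$ has polynomial size, each $C_{a,b}$ is $O(|A|^2)$ copies of such circuits, and the final circuit is $O(|A|^2)$ copies of the $C_{a,b}$, so both the output circuit and the total running time are polynomial in $\|\m a\|$. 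As a by-product, this argument gives a direct, constructive proof that having local Maltsev term operations for all two-element subsets of $A^2\times\{0,1\}$ is sufficient for the existence of a Maltsev term, so there is no need to appeal to that implication as a black box.
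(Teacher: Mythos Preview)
Your proposal is correct and follows essentially the same approach as the paper: produce polynomial-size circuits $C_{a,b,c,d}$ for the local Maltsev operations by subuniverse generation in $\m a^2$, then implement the two inductive stages by maintaining two distinguished output gates (for $t_{a,b}^j(x,y,z)$ and $t_{a,b}^j(y,y,z)$ in Stage~1, and for $t_j(x,y,y)$ and $t_j(x,y,z)$ in Stage~2) so that each step only adds a bounded amount of circuitry. The only cosmetic difference is that the paper first treats the $t_{a,b,c,d}$ and $t_{a,b}$ as new basic operations and expands the corresponding gates at the end, whereas you splice in fresh copies of the sub-circuits as you go; these yield the same final circuit and the same polynomial bounds.
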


\begin{proof}
 Let $n =
  |A|$.  Recall that $\m A$ has at least one
  basic operation of positive arity and hence $\|\m A\|\geq n$. Let $m\geq 1$ be the
  maximal arity of an operation of $\m A$.

  We construct a~circuit representing a~Maltsev operation in three steps: The
  first step produces, for each $a$, $b$, $c$, $d$ from $A$, a circuit that computes a local Maltsev term operation $t_{a,b,c,d}$ as defined near the beginning of this section, the second step
  produces circuits that compute $t_{a,b}$, and the final step produces
  a~circuit for a~Maltsev operation $t$. We note that the algorithm can fail
  only in the first step.

  \begin{figure}
  \[
  \begin{tikzpicture}[ split/.style = { shape = circle, draw, fill, inner sep = 0,
      minimum size = 2.5 }, node distance = 1.7cm, circuit logic US  ]
    \node [and gate, inputs = {nnn}, point right] (out1) at (0,1) {$t_{a,b,u,v}$};
    \node [and gate, inputs = {nnn}, point right] (out2) at (0,-1) {$t_{a,b,u,v}$};

    \draw (out1.output) -- ++(right:.5) node [right] {$t^{j+1}_{a,b}(x,y,z)$} ;
    \draw (out2.output) -- ++(right:.5) node [right] {$t^{j+1}_{a,b}(y,y,z)$} ;

    \node at ($(out1.input 1) + (left:2.5)$) (j1) {$t^j_{a,b}(x,y,z)$};
    \node at ($(out2.input 1) + (left:2.5)$) (j2) {$t^j_{a,b}(y,y,z)$};

    \draw (j1.east) -- (out1.input 1);

    \draw (j2.east) -- (out2.input 1);
    \node (split1) [ split ] at ($(j2.east)!.5!(out2.input 1)$) {};
    \draw (split1) |- (out1.input 2);
    \draw (split1) |- (out2.input 2);

    \node [split] (split2) at ($(split1) + (-.25,0) - (out1.input 1) + (out1.input 3)$) {};
    \draw (split2) |- (out1.input 3);
    \draw (split2) -- (out2.input 3);

    \node [ left of = j1 ] (out1j){};
    \draw (j1.west) -- (out1j.east);
    \node [ left of = j2 ] (out2j){};
    \draw (j2.west) -- (out2j.east);

    \node (x) at (-8,1.5) {$x$};
    \node (y) at (-8,0) {$y$};
    \node (z) at (-8,-1.5) {$z$};

    \node [ right of = x ] (in1j){};
    \node [ right of = y ] (in2j){};
    \node [ right of = z ] (in3j){};

    \draw (x.east) -- (in1j);
    \draw (y.east) -- (in2j);
    \draw (z.east) -- (in3j);

    \node (split3) [split] at ($(z.east) !.5! (in3j.west)$) {};
    \node (mid) [ below of = j2 ] {$z$};
    \draw (split3) |- (mid) -| (split2);

    \node [ draw, dashed, fit = (in1j) (in2j) (in3j) (out1j) (out2j) ] {$C^j_{a,b}$};
  \end{tikzpicture}
  \]
    \caption{Recursive definition of circuit $C^{j+1}_{a,b}$.}
    \label{fig:C-j-ab}
  \end{figure}
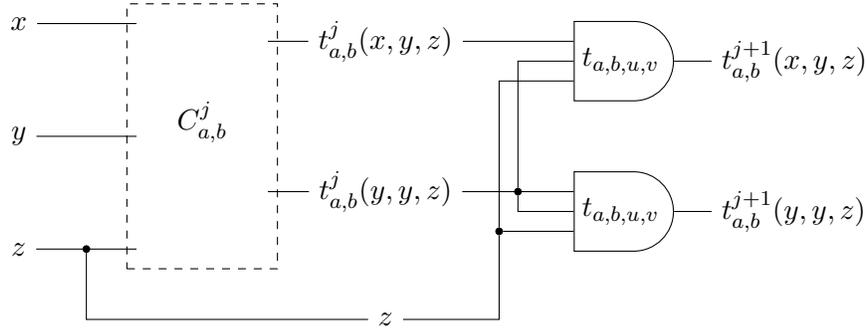

  Step 1: Circuits for $t_{a,b,c,d}$. For each $a,b,c,d$, we aim to produce a~circuit that computes a local Maltsev term
  operation $t_{a,b,c,d}$. To do this, we consider the subuniverse $R$ of $\m
  A^2$ generated by $\{(a,c), (b,c), (b,d)\}$.
  According to Proposition~6.1 from~\cite{freese-valeriote} $R$ can be generated in time $O(||\m a||^2m)$.
  It is clear that $\m A$ has
  a~local Maltsev term operation $t_{a,b,c,d}$ if and only if $(a,d) \in R$. Our algorithm produces a circuit for $t_{a,b,c,d}$ by generating elements of $R$ one at a time and keeping track of circuits that witness
  the membership of these elements.

  More precisely, we employ a subuniverse generating algorithm to produce a sequence
  $r_1 = (a,c), r_2 = (b,c), r_3 = (b,d), r_4, \dots$ of elements of $R$ (in time $O(||\m a||^2m)$) such
  that each $r_{k+1}$, for $k \ge 3$, is obtained from $r_1,\dots,r_{k}$ by a~single
  application of an operation $f$ of $\m A^2$.  Our algorithm will also produce  a~sequence of
  ternary circuits $C_{a,b,c,d}^3 \subseteq C_{a,b,c,d}^4 \subseteq \dots$ such
  that each $C_{a,b,c,d}^k$ has $k$ outputs, and the values of $C_{a,b,c,d}^k$
  on $r_1,r_2,r_3$ give $r_1,\dots,r_k$. We define $C_{a,b,c,d}^3$ to be
  the~circuit with no gates, and outputs $x_1$, $x_2$, $x_3$. The circuit
  $C_{a,b,c,d}^{k+1}$ is defined inductively from $C_{a,b,c,d}^k$: Consider an
  operation $f$ and $r_{i_1},\dots,r_{i_p}$ with $i_j \leq k$ such that
  $r_{k+1} = f(r_{i_1},\dots,r_{i_p})$; add a~gate labeled $f$ to
  $C_{a,b,c,d}^k$ connecting its inputs with the outputs of $C_{a,b,c,d}^k$
  numbered by $i_j$ for $j = 1,\dots, p$. We designate the output of this gate
  as the $(k+1)$-st output of $C_{a,b,c,d}^{k+1}$.

  It is straightforward to
  check that the circuits $C_{a,b,c,d}^k$ satisfy the requirements. We also
  note that the size of $C_{a,b,c,d}^k$ is exactly $k$. We stop this inductive
  construction at some step $k$ if $r_k = (a,d)$, in which case we produce the circuit
  $C_{a,b,c,d}$  from $C_{a,b,c,d}^k$ by indicating a~single output to be the
  $k$-th output of $C_{a,b,c,d}^k$.  If, on the other hand, we have generated all of $R$ without producing $(a,d)$ at any step then the algorithm halts and outputs that $\m a$ does not have a Maltsev term operation.  The soundness of our algorithm follows from the fact that $\m a$ has a~local Maltsev term $t_{a,b,c,d}$ if and only if $(a,d) \in R$
  and that $\m a$ has a Maltsev term if and only if it has local Maltsev terms $t_{a,b,c,d}$ for all $a$, $b$, $c$, $d \in A$.
   The algorithm produces circuits of
  size $\bigO(n^2)$ and spends most of its
  time generating new elements of $R$;
  generating each $C_{a,b,c,d}$ takes time
  $O(\|\m a\|^2m)$, making the total time complexity of Step 1 to be $\bigO(\|\m
  a\|^2mn^4)$.

  Step 2: Circuits for $t_{a,b}$. At this point we assume that the functions $t_{a,b,c,d}$ are part of
  the signature. It is clear that the full circuit can be obtained by
  substituting the circuits $C_{a,b,c,d}$ for gates labeled by $t_{a,b,c,d}$,
  and this can be still done in polynomial time.

  Our task is to obtain a~circuit for $t_{a,b}$. We do this by inductively constructing circuits $C^j_{a,b}$ that compute two
  values of the terms $t_{a,b}^j$, namely $t_{a,b}^j(x,y,z)$ and
  $t_{a,b}^j(y,y,z)$. Starting with $j = 0$ and $t^0(x,y,z) = x$, we define
  $C_{a,b}^0$ to be the circuit with no gates and outputs $x,y$. Further, we
  define circuit $C_{a,b}^{j+1}$ inductively from $C_{a,b}^j$ by adding two
  gates labeled by $t_{a,b,u,v}$, where $u = t_{a,b}^j(a_{j+1}, a_{j+1},
  b_{j+1})$ and $v = b_{j+1}$: the first gate has as inputs the two outputs of
  $C_{a,b}^j$ and $z$, the second gate has as inputs two copies of the second
  output of $C_{a,b}^j$ and $z$. See Figure~\ref{fig:C-j-ab} for a~graphical
  representation. Again, it is straightforward to check that these circuits
  have the required properties.  Also note that the size of $C^j_{a,b}$ is
  bounded by $2j+3$ which is a~polynomial. The final circuit $C_{a,b}$
  computing $t_{a,b}$ is obtained from $C_{a,b}^{n^2}$ by designating the first
  output of $C_{a,b}^{n^2}$ to be the only output of $C_{a,b}$. Once we have
  $t_{a,b,c,d}$ in the signature, this process will run in time $\bigO(n^2)$.

  Step 3: Circuit for a Maltsev term. Again, we assume that $t_{a,b}$ are basic operations, and construct
  circuits $C_j$ computing two values $t_j(x,y,y)$ and $t_j(x,y,z)$ of $t_j$
  inductively. The proof is analogous to Step 2, with the only difference that
  we use Figure~\ref{fig:C-j} for the inductive definition. Again the time
  complexity is $\bigO(n^2)$.

  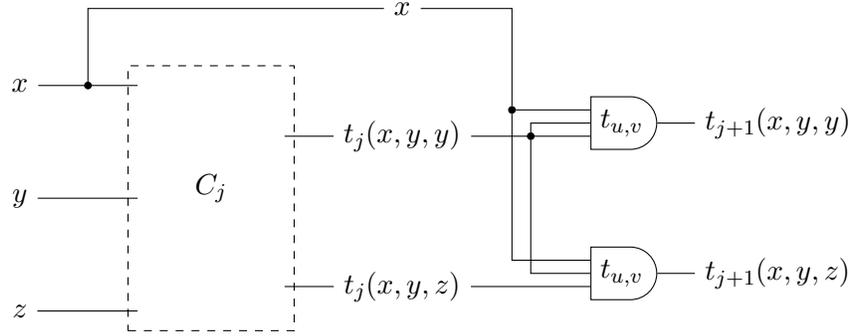
\begin{figure}
    \begin{centering}
    \begin{tikzpicture}[ split/.style = { shape = circle, draw, fill, inner sep = 0,
        minimum size = 2.5 }, node distance = 1.7cm, circuit logic US  ]
      \node [and gate, inputs = {nnn}, point right, minimum size = .7cm] (out1) at (0,1) {$t_{u,v}$};
      \node [and gate, inputs = {nnn}, point right, minimum size = .7cm] (out2) at (0,-1) {$t_{u,v}$};

      \draw (out1.output) -- ++(right:.5) node [right] {$t_{j+1}(x,y,y)$} ;
      \draw (out2.output) -- ++(right:.5) node [right] {$t_{j+1}(x,y,z)$} ;

      \node at ($(out1.input 3) + (left:2.5)$) (j1) {$t_j(x,y,y)$};
      \node at ($(out2.input 3) + (left:2.5)$) (j2) {$t_j(x,y,z)$};
      \node (mid) [ above of = j1 ] {$x$};

      \draw (j1.east) -- (out1.input 3);
      \draw (j2.east) -- (out2.input 3);

      \node (split1) [ split ] at ($(j1.east)!.5!(out1.input 3)$) {};
      \draw (split1) |- (out1.input 2);
      \draw (split1) |- (out2.input 2);

      \node [split] (split2) at ($(split1) + (-.25,0) + (out1.input 1) - (out1.input 3)$) {};
      \draw (split2) -- (out1.input 1);
      \draw (split2) |- (out2.input 1);

      \node [ left of = j1 ] (out1j){};
      \draw (j1.west) -- (out1j.east);
      \node [ left of = j2 ] (out2j){};
      \draw (j2.west) -- (out2j.east);

      \node (x) at (-8,1.5) {$x$};
      \node (y) at (-8,0) {$y$};
      \node (z) at (-8,-1.5) {$z$};

      \node [ right of = x ] (in1j){};
      \node [ right of = y ] (in2j){};
      \node [ right of = z ] (in3j){};

      \draw (x.east) -- (in1j);
      \draw (y.east) -- (in2j);
      \draw (z.east) -- (in3j);

      \node (split3) [split] at ($(x.east) !.5! (in1j.west)$) {};
      \draw (split3) |- (mid) -| (split2);

      \node [ draw, dashed, fit = (in1j) (in2j) (in3j) (out1j) (out2j) ] {$C_j$};
    \end{tikzpicture}
    \end{centering}
    \caption{Recursive definition of circuit $C_{j+1}$.}
    \label{fig:C-j}
  \end{figure}

  Each step runs in time polynomial in $\|\m a\|$ (the time complexity is
  dominated by Step 1) and outputs a~polynomial
  size circuit. This also implies that expanding the gates according to their
  definitions in Steps 2 and 3 can be done in polynomial time; the final size of
  the output circuit will be bounded by $\bigO(n^6)$.
\end{proof}

\begin{corollary}\label{maltsevterm}
  Let $\m a$ be a~finite idempotent algebra.  There is an algorithm whose
  runtime can be bounded by a~polynomial in the size of $\m a$ that will
  produce the table of some Maltsev term operation of $\m a$, should one exist.
\end{corollary}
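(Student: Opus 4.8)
The plan is to invoke Theorem~\ref{maltsevcircuit} and then turn the circuit it produces into an operation table by brute-force evaluation. First I would run the polynomial-time algorithm of Theorem~\ref{maltsevcircuit} on the input algebra $\m a$. If that algorithm reports that $\m a$ has no Maltsev term operation, we report the same and halt; otherwise it returns, in time polynomial in $\|\m a\|$, a~ternary circuit $C$ in the language of $\m a$ whose associated term function $p(x,y,z)$ is a~Maltsev operation of $\m a$, together with the guarantee that $C$ has size $\bigO(n^6)$, where $n = |A|$.

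Next I would construct the table of $p$ by evaluating $C$ on every triple $(a,b,c) \in A^3$. As noted in the discussion of circuits preceding Theorem~\ref{maltsevcircuit}, the value of a~circuit on a~single input tuple can be computed in time linear in the size of the circuit, since each gate contributes one lookup in the table of the corresponding basic operation of $\m a$ (and each such lookup is a~constant number of operations on objects whose size is bounded by $\|\m a\|$). Hence each of the $n^3$ evaluations costs $\bigO(n^6)$ gate evaluations, and the entire table is produced in time polynomial in $\|\m a\|$, using that $n \le \|\m a\|$. Writing out the table is likewise unproblematic: it has exactly $n^3 \le \|\m a\|^3$ entries.

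There is essentially no obstacle here. Correctness is immediate from Theorem~\ref{maltsevcircuit}: the tabulated function is, by definition of circuit evaluation, the term function computed by $C$, which is a~Maltsev term operation of $\m a$. The only thing that requires attention is the routine bookkeeping that combines the polynomial bound on the size of $C$, the linear-time cost of evaluating it on one tuple, and the $n^3$-sized output; if one wishes to be careful about the model of computation, the single point worth spelling out is that a~gate evaluation is an~indexing into the table of a~basic operation, which is part of the input, so its cost is absorbed into the measure $\|\m a\|$.
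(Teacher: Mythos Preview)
Your proposal is correct and follows essentially the same approach as the paper's own proof: invoke Theorem~\ref{maltsevcircuit} to obtain a~polynomial-size circuit, then evaluate it on all $|A|^3$ input triples, using that circuit evaluation is linear in the circuit size. You simply spell out the bookkeeping (the $\bigO(n^6)$ circuit size, the cost of a~gate evaluation as a~table lookup) in more detail than the paper does.
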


\begin{proof}
  The polynomial-time algorithm is as follows. First, generate a~polynomial size
  circuit for some Maltsev term operation of $\m a$. This can be done in polynomial time by
  the above theorem. Second, evaluate this circuit at all $\lvert A\rvert^3$
  possible inputs. The second step runs in polynomial time since evaluation of
  a~circuit is linear in the size of the circuit.
\end{proof}

We note that there is also a more straightforward algorithm for producing the
operation table of a Maltsev term which follows the circuit construction but
instead of circuits, it remembers the tables for each of the relevant term
operations.

\section{Local minority terms}

In contrast to the situation for Maltsev terms highlighted in the previous
section, we will show that having plenty of `local' minority terms does not
guarantee that a~finite idempotent algebra will have a~minority term.  One
consequence of this is that an approach along the lines in~\cite{horowitz-ijac,
kazda-valeriote, Valeriote_Willard_2014} to finding an efficient algorithm to
decide if a~finite idempotent algebra has a~minority term will not work.

In this section, we will construct for each odd natural number $n > 2$ a~finite
idempotent algebra $\m a_n$ with the following properties: The universe of $\m
a_n$ has size $4n$ and $\m a_n$ does not have a~minority term, but for every
subset $E$ of $A_n$ of size $n-1$ there is a~term of $\m a_n$ that acts as
a~minority term on the elements of $E$.

We start our construction by fixing some odd $n > 2$ and some minority
operation $m$ on the set $[n] = \{1, 2, \dots, n\}$. To make things concrete
we set
\[
  m(x,y,z)=\begin{cases}
    x& y=z\\
    y& x=z\\
    z& \text{else,}
  \end{cases}
\]
but note that any minority operation on $[n]$ will do.

Since there are two nonisomorphic groups of order 4, we have two different
natural group operations on $\{0,1,2,3\}$: addition modulo~4, which we will
denote by `$+$' (its inverse is `$-$'), and bitwise XOR, which we denote by
`$\oplus$' (this operation takes bitwise XOR of the binary representations of
input numbers, so for example $1\oplus 3=2$). Throughout this section, we will
use arithmetic modulo 4, e.g., $6x = x + x$, for all expressions except those
involving indices.

The construction relies on similarities and subtle differences of the two group
structures, and the derived Maltsev operations, $x-y+z$ and $x\oplus y\oplus z$.
Both these operations share a congruence $\equiv_2$ that is given by taking
the remainder modulo 2. We note that $x\equiv_2 y$ if and only if $2x = 2y$.

\begin{observation}\label{obs:maltsev-diff}
  Let $x,y,z\in \{0,1,2,3\}$. Then 
\[
  (x\oplus y\oplus z) - (x - y + z) \in \{0,2\}
,\]
and moreover the result depends only on the classes of
  $x$, $y$, and $z$ in the congruence $\equiv_2$ (i.e., the least significant
binary bits of $x$, $y$, and $z$).
\end{observation}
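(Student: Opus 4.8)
The plan is to verify Observation~\ref{obs:maltsev-diff} by a direct but organised computation, exploiting the fact that both Maltsev operations $x-y+z$ and $x\oplus y\oplus z$ descend to the same operation modulo the congruence $\equiv_2$, so that their difference is always $\equiv_2 0$, i.e.\ lies in $\{0,2\}$. First I would observe that both $+$ and $\oplus$ reduce modulo $2$ to addition in $\mathbb{Z}_2$ on the least significant bit: indeed $x+y\equiv_2 x\oplus y$ because the two group operations on $\{0,1,2,3\}$ differ only in the carry bit. Hence $x-y+z\equiv_2 x\oplus y\oplus z$, which immediately gives that the difference $(x\oplus y\oplus z)-(x-y+z)$ lies in the kernel of reduction modulo $2$, namely $\{0,2\}$. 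This disposes of the first assertion.

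For the second assertion — that the value of the difference depends only on the parities of $x$, $y$, $z$ — I would argue that adding $2$ to any one of the three arguments changes neither $x\oplus y\oplus z$ nor $x-y+z$ modulo the relevant bit. Concretely, replacing $x$ by $x+2$ changes $x-y+z$ by $2$, and changes $x\oplus y\oplus z$ by $x\oplus 2$ versus $x$, which also differs by exactly $2$ (since XOR with $2$ flips the second bit and leaves the first bit, hence the low bit, untouched, so the numerical change is $\pm 2$, i.e.\ $2$ mod $4$); the same holds for $y$ (where $x-y+z$ changes by $-2\equiv 2$) and for $z$. Therefore each of the two expressions changes by $2$ when any single argument is shifted by $2$, so their difference is unchanged. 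Since every element of $\{0,1,2,3\}$ is congruent mod $\equiv_2$ to $0$ or $1$ via a shift by $2$, the difference is a function of $(x\bmod 2, y\bmod 2, z\bmod 2)$ alone.

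Alternatively, and perhaps more transparently for a written-up proof, I would simply tabulate: by the previous paragraph it suffices to evaluate $(x\oplus y\oplus z)-(x-y+z)$ at the eight triples with entries in $\{0,1\}$ and record that each value lies in $\{0,2\}$; in fact one finds the difference equals $0$ unless at least two of $x,y,z$ are odd, in which case it equals $2$ (this matches $x\oplus y$ versus $x+y$ on bits, where the carry into the second position is the majority/AND-type term). Writing out these eight cases is routine and short.

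I do not expect any genuine obstacle here; the only thing to be careful about is the bookkeeping of ``arithmetic modulo $4$'' versus the bitwise description of $\oplus$, in particular making sure that the claim ``changing one argument by $2$ alters $x\oplus y\oplus z$ by $2$'' is stated in terms of the value mod $4$ and not confused with a sign. Once the reduction-mod-$2$ observation is in place, everything else is a finite check, so the write-up can be kept to a few lines, either via the shift argument or via the eight-line table.
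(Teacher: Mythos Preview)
Your main argument is correct and matches the paper's proof almost exactly: both establish the first claim by noting that the two Maltsev operations agree modulo $\equiv_2$, and both establish the second claim via the observation that shifting any one argument by $2$ (equivalently, that $x\oplus 2 = x+2 = x-2$) changes each of $x\oplus y\oplus z$ and $x-y+z$ by exactly $2$, leaving their difference invariant.

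One small correction to your alternative write-up: the parenthetical description of the eight-case table is wrong. For instance, $(x,y,z)=(0,1,0)$ gives $x\oplus y\oplus z=1$ and $x-y+z=3$, so the difference is $2$ with only one odd argument; conversely $(1,1,0)$ gives difference $0$ with two odd arguments. The actual pattern is that the difference equals $2$ precisely when $x\equiv_2 z\not\equiv_2 y$. This does not affect the validity of either proof, since your shift argument already establishes the claim and a correctly computed table would still verify it; just be careful if you choose to include the explicit pattern in a write-up.
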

\begin{proof}
Both Maltsev operations agree modulo $\equiv_2$, hence the difference lies in
the $\equiv_2$-class of 0.

To see the second part, it is enough to observe that $x\oplus 2=x+2=x-2$ for all
$x$. Hence changing, say $x$ to $x'=x\oplus 2$ simply flips the most
  significant binary bit
of both  $x\oplus y\oplus z$ and $x - y + z$, keeping the difference the same.
\end{proof}


\begin{definition}
Let $A_n=[n]\times [4]$. For $i \in [n]$, we define  $t_i(x,y,z)$ to be the
following operation on $A_n$:
\[
  t_i((a_1,b_1), (a_2,b_2), (a_3,b_3)) =
  \begin{cases}
    (i,b_1 - b_2 + b_3)
      \quad\text{if $a_1 = a_2 = a_3 = i$, and} \\
    (m(a_1,a_2,a_3),b_1\oplus b_2 \oplus b_3),
      \quad\text{otherwise.}
  \end{cases}
\]
The algebra $\m a_n$ is defined to be the algebra with universe $A_n$ and
basic operations $t_1,\dots,t_n$.
\end{definition}

By construction, the following is true.

\begin{claim}\label{local}
  For every $(n-1)$-element subset $E$ of $A_n$, there is a~term operation of
  $\m a_n$ that satisfies the minority term equations when restricted to
  elements from $E$.
\end{claim}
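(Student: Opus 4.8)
The plan is to exploit the fact that an $(n-1)$-element set is too small to exhaust all $n$ possible first coordinates of elements of $A_n=[n]\times[4]$. First I would fix an index $i_0\in[n]$ that does not occur as the first coordinate of any element of $E$; such an $i_0$ exists because the set of first coordinates appearing in $E$ has size at most $|E|=n-1<n$. I then claim that the basic operation $t_{i_0}$ itself — which is in particular a term operation of $\m a_n$ — already satisfies the minority equations on all substitutions of its variables by elements of $E$.

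To verify this I would argue as follows. If $(a_1,b_1),(a_2,b_2),(a_3,b_3)$ are (not necessarily distinct) elements of $E$, then $a_1,a_2,a_3$ are all different from $i_0$, so the first clause in the definition of $t_{i_0}$, which requires $a_1=a_2=a_3=i_0$, never applies. Hence on triples drawn from $E$ the operation $t_{i_0}$ agrees with the componentwise operation
\[
  \big((a_1,b_1),(a_2,b_2),(a_3,b_3)\big)\longmapsto\big(m(a_1,a_2,a_3),\,b_1\oplus b_2\oplus b_3\big).
\]
Now $m$ is a minority operation on $[n]$ by our choice of $m$, and $x\oplus y\oplus z$ is a minority operation on the four-element set of second coordinates because $(\{0,1,2,3\},\oplus)$ is the elementary abelian $2$-group $\mathbb{Z}_2^2$, in which $x\oplus x=0$ and therefore $y\oplus x\oplus x=x\oplus y\oplus x=x\oplus x\oplus y=y$. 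A componentwise product of minority operations is a minority operation, so the displayed operation is a minority operation on all of $A_n$; in particular $t_{i_0}(v,u,u)=t_{i_0}(u,v,u)=t_{i_0}(u,u,v)=v$ for all $u,v\in E$, which is exactly the claim.

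I do not expect a genuine obstacle here: the entire content is the counting observation that pins down the missing index $i_0$, after which the ``exceptional'' branch of $t_{i_0}$ is never triggered on inputs from $E$ and what remains is an honest minority operation. The one thing worth keeping in mind is that $t_{i_0}$ is in general very far from being a minority operation on all of $A_n$ — indeed the whole point of the construction (to be used later) is that $\m a_n$ has no global minority term at all — so the restriction to the small set $E$ is essential.
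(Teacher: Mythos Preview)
Your argument is correct and is exactly the paper's proof: pick an index $i_0$ that does not occur as a first coordinate of any element of $E$ (possible since $|E|=n-1<n$), and observe that $t_{i_0}$ then acts as a minority operation on $E$. The paper records this in one line, while you spell out the verification that the second-coordinate part $x\oplus y\oplus z$ is a minority operation, but the idea is the same.
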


\begin{proof}
  Pick $i\in [n]$ such that no element of $E$ has its first coordinate equal to
  $i$; the operation $t_i$ is a local minority for this $E$.
\end{proof}

\begin{proposition}\label{prop:An}
  For $n > 1$ and odd, the algebra $\m a_n$ does not have a~minority term.
\end{proposition}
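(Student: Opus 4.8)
The plan is to suppose that $\m a_n$ has a~minority term $M$, write $M$ as a~term (tree) in the operations $t_1,\dots,t_n$, and derive a~contradiction by analysing how $M$ acts on the fibers $\{j\}\times[4]$. First I would record two congruences of $\m a_n$. The partition of $A_n$ into the sets $\{j\}\times[4]$ is a~congruence $\theta$, because the first coordinate of $t_i$ on a~triple is $m$ of the first coordinates of that triple, and each class $\{j\}\times[4]$ is in fact a~subalgebra since $m(j,j,j)=j$. The partition of $A_n$ by the parity of the second coordinate is a~congruence $\rho$, and $\m a_n/\rho$ is the two-element algebra all of whose operations are the minority on a~two-element set; since $M\bmod\rho$ must be a~minority of $\m a_n/\rho$ it equals the ternary sum, so \emph{every variable occurs an odd number of times in~$M$}. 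On the fiber $\{j\}\times[4]$ the operation $t_i$ acts in the second coordinate as $p_+(x,y,z):=x-y+z$ if $i=j$ and as $p_\oplus(x,y,z):=x\oplus y\oplus z$ if $i\ne j$. Consequently, if $M_j$ denotes the operation on $[4]$ obtained from $M$ by replacing every gate $t_j$ by $p_+$ and every other gate $t_i$ by $p_\oplus$, then $M$ restricted to the $j$-th fiber is, in the second coordinate, exactly $M_j$; and since $M$ is a~minority term, each $M_j$ is a~minority operation on~$[4]$.

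Next I would introduce a~discrepancy function. If $N$ is any term in $p_+,p_\oplus$ in which every variable occurs an odd number of times then $N\equiv x_1+x_2+x_3\pmod2$, so $N(e_1,e_2,e_3)-(e_1\oplus e_2\oplus e_3)\in\{0,2\}$ for $e_i\in\{0,1\}$; let $D_N(e_1,e_2,e_3)\in\mathbb Z_2$ be half of this value. Put $\delta(x,y,z):=\tfrac12\bigl((x\oplus y\oplus z)-(x-y+z)\bigr)$, which by Observation~\ref{obs:maltsev-diff} lies in $\mathbb Z_2$ and depends only on the parities of $x,y,z$. The key claim is that changing one gate of a~term from $p_\oplus$ to $p_+$ changes the discrepancy by $\delta$ evaluated at the parities of the three subterms feeding that gate; this holds because both $p_+$ and $p_\oplus$ satisfy $p(u+2a,v+2b,w+2c)=p(u,v,w)+2(a+b+c)$, so the size-$2$ error created at the switched gate propagates additively, and with unchanged parities, up the unique path of the term tree to its output. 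Writing $M_\oplus$ for the all-$p_\oplus$ substitution and $\widehat M$ for the all-$p_+$ substitution, we have $M_\oplus=x_1\oplus x_2\oplus x_3$ so $D_{M_\oplus}=0$; summing the contributions $\delta$ over the gates labelled $t_j$ gives $D_{M_j}$, and summing further over $j\in[n]$ uses up every gate, so
\[
  \sum_{j\in[n]} D_{M_j}\;=\;D_{\widehat M}.
\]

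Finally I would evaluate the two sides. Because a~minority operation $M_j$ satisfies $M_j(y,x,x)=M_j(x,y,x)=M_j(x,x,y)=y$, the function $D_{M_j}$ vanishes on all of $\{0,1\}^3$ (every binary triple has two equal coordinates), so the left-hand side is $0$. On the other hand $\widehat M$ is $\mathbb Z_4$-linear, $\widehat M=c_1x_1+c_2x_2+c_3x_3$ with $c_1+c_2+c_3\equiv1\pmod4$, and reducing modulo $2$ shows that $c_1,c_2,c_3$ are all odd; hence two of them coincide, say $c_a=c_b$, and at the tuple with $e_a=e_b=1$ and the third coordinate $0$ we get $\widehat M=2c_a\equiv2\pmod4$ while $e_1\oplus e_2\oplus e_3=0$, so $D_{\widehat M}\ne0$ there. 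This contradicts the displayed identity, so no minority term $M$ exists.

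I expect the only genuine work to be the additivity claim in the second paragraph: that the second-coordinate error accumulated while interchanging the two Maltsev operations is additive over the gates of $M$ and depends only on the input parities. This is exactly where Observation~\ref{obs:maltsev-diff} is used in full, and it requires a~short induction on the structure of the term (and, if one prefers to allow $M$ to be presented as a~circuit, a~count of paths modulo~$2$); every other step is a~finite computation.
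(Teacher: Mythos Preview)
Your argument is correct, and it follows a genuinely different route from the paper's.  The paper exhibits an explicit invariant relation $R\le (\m a_n)^{3n}$ together with three tuples in $R$ whose coordinatewise minority lies outside $R$; the verification that $R$ is a subuniverse uses Observation~\ref{obs:maltsev-diff} to control the error terms $c_k$, and the conclusion $(n-1)c_k=0$ is precisely where the hypothesis that $n$ is odd enters.  Your proof is instead a syntactic analysis of a putative minority term $M$: you look at the induced operations $M_j$ on the fibers $\{j\}\times[4]$, introduce the $\mathbb Z_2$-valued discrepancy $D_N$ measuring the deviation from $p_\oplus$, and use the propagation identity $p(u+2a,v+2b,w+2c)=p(u,v,w)+2(a+b+c)$ (valid for both $p_+$ and $p_\oplus$) to show that these discrepancies add up over the gates of $M$.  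The contradiction then comes from a short pigeonhole-and-parity computation on the $\mathbb Z_4$-linear term $\widehat M$.

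Each approach has its advantages.  The paper's semantic argument produces a concrete relational obstruction, which is the natural currency if one later wants to describe the relational clone of $\m a_n$ or compare it with other algebras; on the other hand it needs a separate (omitted) argument for even $n$.  Your approach is more self-contained and, notably, never uses the parity of $n$: the identity $\sum_{j\in[n]} D_{M_j}=D_{\widehat M}$ holds because every gate is labelled by exactly one $t_j$, so you in fact prove the proposition for all $n\ge 1$, recovering the even case that the paper only states.  The one place where care is genuinely required is the additivity/propagation claim in your second paragraph, and your justification via the displayed identity for $p_+$ and $p_\oplus$ (together with the fact that the parities at every gate are independent of which of the two Maltsev operations sits there) is exactly what is needed; a short induction on the height of the subterm makes it rigorous.
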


\begin{proof}
  Given some $(i,a)\in A_n$, we will refer to $a$ as the \emph{arithmetic part}
  of $(i,a)$. This is to avoid talking about `second coordinates' in
  the confusing situation when $(i,a)$ itself is a~part of a~tuple of elements
  of $A_n$.

  To prove the proposition, we will define a~certain subuniverse $R$ of $(\m
  a_n)^{3n}$ and then show that $R$ is not closed under any minority operation
  on $A_n$ (applied coordinate-wise). We will write $3n$-tuples of elements of
  $A_n$ as $3n\times 2$ matrices where the arithmetic parts of the elements
  make up the second column.

  Let $R \subseteq (A_n)^{3n}$ be the set of all $3n$-tuples of the form
  \[
  \begin{pmatrix}
    1&x_1\\ 2&x_2\\ \vdots\\ n&x_n\\
    1&x_{n+1}\\ 2&x_{n+2}\\ \vdots\\ n&x_{2n}\\
    1&x_{2n+1}\\ 2&x_{2n+2}\\ \vdots\\ n&x_{3n}\\
  \end{pmatrix}
  \]
  such that
  \begin{align}
    &x_{kn+1} \equiv_2x_{kn+2} \equiv_2 \dots \equiv_2 x_{kn+n},
      &\text{for $k=0,1,2$, and} \label{eqn:bits} \\
    &\sum_{i=1}^{3n} x_i = 2.\label{eqn:strange-sum}
  \end{align}
  The three equations from (\ref{eqn:bits}) mean that the least significant bits of the
  arithmetic parts of the first $n$ entries agree and similarly for the second
  and the last $n$ entries; equation (\ref{eqn:strange-sum}) can be viewed
  as a~combined parity check on all involved bits.

  \begin{claim}
    The relation $R$ is a~subuniverse of $(\m a_n)^{3n}$.
  \end{claim}
  \begin{proof}
    By the symmetry of the $t_i$'s and $R$, it is enough to show that $t_1$
    preserves $R$. Let us take three arbitrary members of $R$:
    \[
    \begin{pmatrix}
      1&x_{1,1}\\ 2&x_{1,2}\\ \vdots\\ n&x_{1,n}\\
      1&x_{1,n+1}\\ 2&x_{1,n+2}\\ \vdots\\ n&x_{1,2n}\\
      1&x_{1,2n+1}\\ 2&x_{1,2n+2}\\ \vdots\\ n&x_{1,3n}\\
    \end{pmatrix},
    \begin{pmatrix}
      1&x_{2,1}\\ 2&x_{2,2}\\ \vdots\\ n&x_{2,n}\\
      1&x_{2,n+1}\\ 2&x_{2,n+2}\\ \vdots\\ n&x_{2,2n}\\
      1&x_{2,2n+1}\\ 2&x_{2,2n+2}\\ \vdots\\ n&x_{2,3n}\\
    \end{pmatrix},
    \begin{pmatrix}
      1&x_{3,1}\\ 2&x_{3,2}\\ \vdots\\ n&x_{3,n}\\
      1&x_{3,n+1}\\ 2&x_{3,n+2}\\ \vdots\\ n&x_{3,2n}\\
      1&x_{3,2n+1}\\ 2&x_{3,2n+2}\\ \vdots\\ n&x_{3,3n}\\
    \end{pmatrix}
    \]
    and apply $t_1$ to them to get:
    \begin{equation}
      \vec r =
      \begin{pmatrix}
        1&x_{1,1}-x_{2,1}+x_{3,1}\\
        2&x_{1,2}\oplus x_{2,2}\oplus x_{3,2}\\
         &\vdots\\
        n&x_{1,n}\oplus x_{2,n}\oplus x_{3,n}\\
        1&x_{1,n+1}-x_{2,n+1}+x_{3,n+1}\\
        2&x_{1,n+2}\oplus x_{2,n+2}\oplus x_{3,n+2}\\
         &    \vdots\\
        n&x_{1,2n}\oplus x_{2,2n}\oplus x_{3,2n}\\
        1&x_{1,2n+1}-x_{2,2n+1}+x_{3,2n+1}   \\
        2&x_{1,2n+2}\oplus x_{2,2n+2}\oplus x_{3,2n+2}\\
         &   \vdots\\
        n&x_{1,3n}\oplus x_{2,3n}\oplus x_{3,3n}\\
      \end{pmatrix}
      \label{eqn:r}
    \end{equation}
    We want to verify that $\vec r\in R$. First note that (\ref{eqn:bits}) is
    satisfied: This follows from the fact that $x-y+z$ and $x\oplus y\oplus z$
    give the same result modulo 2, and the assumption that the original three
    tuples satisfied (\ref{eqn:bits}).

    What remains is to verify the property~(\ref{eqn:strange-sum}). If in the
    equality~(\ref{eqn:r}) above we replace the operations $\oplus$ by $-$ and
    $+$, verifying~(\ref{eqn:strange-sum}) is easy: The sum of the
    arithmetic parts of such a modified tuple is
    \begin{equation}
      \sum_{j=1}^{3n} (x_{1,j}-x_{2,j}+x_{3,j})=
      \sum_{j=1}^{3n}
      x_{1,j}-\sum_{j=1}^{3n}x_{2,j}+\sum_{j=1}^{3n}x_{3,j}=2-2+2=2.
      \label{eqn:2}
    \end{equation}
    This is why we need to examine the difference between the $\oplus$-based
    and $+$-based Maltsev operations. For $k\in \{0,1,2\}$ and $i\in
    \{1,\dots,n\}$ we let
    \[
      c_{k,i} = (x_{1,kn+i} \oplus x_{2,kn+i} \oplus x_{3,kn+i}) -
      (x_{1,kn+i} - x_{2,kn+i} + x_{3,kn+i})
    \]
   By the second part of Observation~\ref{obs:maltsev-diff}, $c_{k,i}$ does not
   depend on $i$ (changing $i$ does not change
    the $x_{j,kn+i}$'s modulo $\equiv_2$ by condition~(\ref{eqn:bits}) in the
    definition of $R$). Hence we can write just $c_k$ instead of $c_{k,i}$.

    Using $c_0$, $c_1$, and
    $c_2$ to adjust for the differences between the two Maltsev operations, we can express the
    sum of the arithmetic parts of the tuple $\vec{r}$ as
    \[
      \sum_{j=1}^{3n} (x_{1,j}-x_{2,j}+x_{3,j})+\sum_{i=2}^{n}
      c_0+\sum_{i=2}^{n} c_1+\sum_{i=2}^{n} c_2
      = 2+(n-1)(c_0+c_1+c_2)
    \]
    where we used~(\ref{eqn:2}) to get the right hand side. We chose $n$ odd,
    hence $n-1$ is even and each $c_k$ is even by
    Observation~\ref{obs:maltsev-diff}, so $(n-1)c_k=0$ for any $k=0,1,2$. We see that the sum of the
    arithmetic parts of $\vec{r}$ is equal to 2 which concludes the proof
    of~(\ref{eqn:strange-sum}) for the tuple~$\vec r$ and we are done.
%
  \end{proof}

  It is easy to see that
  \[
  \begin{pmatrix}
    1&0\\ 2&0\\ \vdots\\ n&0\\
    1&1\\ 2&1\\ \vdots\\ n&1\\
    1&1\\ 2&1\\ \vdots\\ n&1\\
  \end{pmatrix},
  \begin{pmatrix}
    1&1\\ 2&1\\ \vdots\\ n&1\\
    1&0\\ 2&0\\ \vdots\\ n&0\\
    1&1\\ 2&1\\ \vdots\\ n&1\\
  \end{pmatrix},
  \begin{pmatrix}
    1&1\\ 2&1\\ \vdots\\ n&1\\
    1&1\\ 2&1\\ \vdots\\ n&1\\
    1&0\\ 2&0\\ \vdots\\ n&0\\
  \end{pmatrix}\in R,
  \quad\text{and}\quad
  \begin{pmatrix}
    1&0\\ 2&0\\ \vdots\\ n&0\\
    1&0\\ 2&0\\ \vdots\\ n&0\\
    1&0\\ 2&0\\ \vdots\\ n&0\\
  \end{pmatrix}\notin R.
  \]
  However, the last tuple can be obtained from the first three by applying any
  minority operation on the set $A_n$ coordinate-wise. From this we conclude
  that $\m a_n$ does not have a~minority term.
\end{proof}

We note that the above construction of $\m a_n$ makes sense for $n$ even as well
and claim that these algebras also have the same key features, namely, by
construction, they have plenty of `local' minority term operations but they do
not have minority terms.  The verification of this last fact for $n$ even is
similar, but slightly more technical than for $n$ odd, and we omit the proof here.

The algebras $\m a_n$ can also be used to witness that having a~lot of local
minority-majority terms does not guarantee the presence of an actual
minority-majority term. By padding with dummy variables, any local minority
term of an algebra $\m a_n$ is also a~term that locally satisfies the
minority-majority term equations.  But since each $\m a_n$ has a~Maltsev term
but not a~minority term, then by Theorem~\ref{thm:join} it follows that $\m
a_n$ cannot have a~minority-majority term.

\section{Deciding minority in idempotent algebras is in \compNP}\label{sec:np}

The results from the previous section imply that one cannot base an efficient
test for the presence of a~minority term in a~finite idempotent algebra on
checking if it has enough local minority terms.   This does not rule out that
the problem is in the class \compP, but to date no other approach to showing
this has worked.  As an intermediate result, we show, at least, that this
decision problem is in \compNP{} and so cannot be \compEXPTIME-complete (unless
$\compNP=\compEXPTIME$).

We first show that an instance $\m a$ of the decision problem \minority\  can
be expressed as a~particular instance of the subpower membership problem for
${\m a}$.

\begin{definition}\label{defSMP}
Given a~finite algebra $\m a$, the \emph{subpower membership problem} for $\m
a$, denoted by $\smp(\m a)$, is the following decision problem:
\begin{itemize}
  \item INPUT: $\vec a_1, \dots, \vec a_k, \vec b \in A^n$
  \item QUESTION: Is $\vec b$ in the subalgebra of $\m a^n$ generated by
    $\{\vec a_1, \dots, \vec a_k\}$?
\end{itemize}
\end{definition}

To build an instance of $\smp(\m a)$ expressing that $\m a$ has a~minority
term, let $I =\{(a,b,c)\mid \mbox{$a, b, c \in A$ and $|\{a,b,c\}| \le 2$}\}$.
So $|I| = 3|A|^2 - 2|A|$. For $(a,b,c) \in I$, let $\min(a,b,c)$ be the minority
element of this triple.  So
\[
  \min(a,b,b) = \min(b,a,b) = \min(b,b,a) = \min(a,a,a) = a.
\]
For $1 \le i \le 3$, let $\vec \pi_i \in A^I$ be defined by $\vec \pi_i(a_1,
a_2, a_3) = a_i$ and define $\vec \mu_A \in A^I$ by $\vec \mu_A(a_1, a_2, a_3)
= \min(a_1, a_2, a_3)$, for all $(a_1, a_2, a_3) \in I$.  Denote the instance
$\vec \pi_1$, $\vec \pi_2$, $\vec \pi_3$, and $\vec \mu_A$ of $\smp(\m a)$ by
$\min(\m a)$.

\begin{proposition}\label{min-instance}
  An algebra $\m a$ has a~minority term if and only if  $\vec \mu_A$ is
  a~member of the subpower of $\m a^I$ generated by $\{\vec \pi_1, \vec \pi_2,
  \vec \pi_3\}$, i.e., if and only if $\min(\m a)$ is a~`yes' instance of
  $\smp(\m a)$ when $\m a$ is finite.
\end{proposition}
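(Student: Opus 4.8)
The plan is to unwind the definitions and observe that this is essentially the statement that term operations of $\m a$ of arity $|I|$ correspond exactly to elements of the subalgebra of $\m a^I$ generated by the projection tuples $\vec\pi_1,\vec\pi_2,\vec\pi_3$. First I would recall the standard fact that for any finite set $I = \{(a_1,a_2,a_3) : \ldots\}$ of triples, an element $\vec t \in A^I$ lies in the subalgebra of $\m a^I$ generated by $\{\vec\pi_1,\vec\pi_2,\vec\pi_3\}$ if and only if there is a ternary term operation $t^{\m a}$ of $\m a$ such that $\vec t(a_1,a_2,a_3) = t^{\m a}(a_1,a_2,a_3)$ for every $(a_1,a_2,a_3) \in I$; this is just the observation that the subalgebra generated by the three projections is the image of the free algebra on three generators under the evaluation map sending the $i$-th generator to $\vec\pi_i$, and that image consists precisely of the tuples of values of ternary terms on the indexing triples.

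Next I would apply this with $\vec t = \vec\mu_A$. The condition $\vec\mu_A \in \langle \vec\pi_1,\vec\pi_2,\vec\pi_3\rangle_{\m a^I}$ then says exactly that there is a ternary term $t(x,y,z)$ of $\m a$ with $t^{\m a}(a_1,a_2,a_3) = \min(a_1,a_2,a_3)$ for all $(a_1,a_2,a_3) \in I$. Since $I$ contains in particular all triples of the forms $(a,b,b)$, $(b,a,b)$, $(b,b,a)$, this says $t^{\m a}(a,b,b) = t^{\m a}(b,a,b) = t^{\m a}(b,b,a) = a$ for all $a,b \in A$, which is exactly the assertion that $t$ is a minority term of $\m a$ (note that the remaining triples in $I$, those with $|\{a,b,c\}|=2$ in other patterns, as well as $(a,a,a)$, impose no constraint beyond idempotence and the choice of a particular witness). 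Conversely, if $\m a$ has a minority term $m$, then the tuple of values of $m^{\m a}$ on the triples in $I$ is an element of $\langle\vec\pi_1,\vec\pi_2,\vec\pi_3\rangle_{\m a^I}$, and since $m^{\m a}$ restricted to the relevant triples agrees with $\min$, that element is $\vec\mu_A$.

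The only subtlety worth spelling out — and the step I expect needs the most care — is the direction: given a minority term $m$, one must check that $m^{\m a}$ agrees with $\min$ on \emph{all} triples in $I$, not merely on those of the three canonical patterns. This is immediate once one notes that every triple $(a,b,c)$ with $|\{a,b,c\}| \le 2$ is a permutation of one of the patterns $(a,b,b)$, $(b,a,b)$, $(b,b,a)$, or is $(a,a,a)$; for the patterns, the minority equations for $m$ give the required value directly, and for $(a,a,a)$ idempotence of $m$ (which every minority operation has) gives $m^{\m a}(a,a,a) = a = \min(a,a,a)$. Since $\min$ was defined so that $\min(a,b,b)=\min(b,a,b)=\min(b,b,a)=a$, the match is exact on all of $I$, so $\vec\mu_A$ is precisely the tuple of values of $m^{\m a}$ and hence lies in the generated subpower. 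This completes both directions, and the final clause of the proposition — that this is the same as $\min(\m a)$ being a `yes' instance of $\smp(\m a)$ — is just a restatement of the definition of $\min(\m a)$ and of $\smp(\m a)$ once $\m a$ is assumed finite so that $A^I$ is a finite power.
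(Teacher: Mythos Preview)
Your argument is correct and follows the same idea as the paper's proof, which simply notes that applying a minority term $m$ coordinatewise to $\vec\pi_1,\vec\pi_2,\vec\pi_3$ yields $\vec\mu_A$, and conversely any term witnessing $\vec\mu_A\in\langle\vec\pi_1,\vec\pi_2,\vec\pi_3\rangle$ must be a minority term; you have merely spelled out the details more explicitly. One small slip: in your opening sentence you write ``term operations of $\m a$ of arity $|I|$'' where you mean arity~$3$ (the generating set has three elements; $|I|$ is the exponent of the power), but your subsequent argument uses the correct arity throughout.
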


\begin{proof}
  If $m(x,y,z)$ is a~minority term for $\m a$, then applying $m$
  coordinatewise to the generators $\vec \pi_1$, $\vec \pi_2$, $\vec \pi_3$
  will produce the element $\vec \mu_A$.  Conversely, any term that produces
  $\vec \mu_A$ from these generators will be a~minority term for $\m a$.
\end{proof}

Examining the definition of $\min(\m a)$, we see that the parameters from
Definition~\ref{defSMP} are $k=3$ and $n=3|A|^2-2|A|$, which is (for algebras
with at least one at least unary basic operation) polynomial in $\|\m A\|$.
For $\m a$ idempotent, we can in fact improve $n$ to $3|A|^2-3|A|$,
since then we do not need to include in $I$ entries of the form $(a,a,a)$.

In general, it is known that for some finite algebras the subpower membership problem can be
\compEXPTIME-complete~\cite{Kozik2008} and that for some others, e.g., for any
algebra that has only trivial or constant basic operations,  it lies in the
class \compP.  In~\cite{Mayr2012}, P.\ Mayr shows that when $\m a$ has a~Maltsev
term, then $\smp(\m a)$ is in \compNP. We claim that a careful reading of Mayr's proof reveals that in fact the following uniform version of the subpower membership problem, where the algebra $\m a$ is considered as part of the input, is also in \compNP.

\begin{definition}
Define \smpun\ to be the following decision problem:
\begin{itemize}
  \item INPUT:  A~list of tables of basic operations of an algebra~$\m A$ that includes a~Maltsev operation, and $\vec a_1, \dots, \vec a_k, \vec b \in A^n$.
  \item QUESTION: Is $\vec b$ in the subalgebra of $\m a^n$ generated by
    $\{\vec a_1, \dots, \vec a_k\}$?
\end{itemize}
\end{definition}
We base the main result of this section
on the following.
\begin{theorem}[see \cite{Mayr2012}]\label{smpun}
  The decision problem \smpun\ is in the class \compNP.
\end{theorem}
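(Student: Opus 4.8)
The plan is to reproduce the argument of \cite{Mayr2012} while tracking how every bound depends on $|A|$ and on the operation tables, so that allowing $\m a$ to vary with the input costs nothing. The tool is the notion of a \emph{compact representation} of a subalgebra $B\le\m a^n$ (the notion of compact representation used in \cite{Mayr2012}): a subset $C\subseteq B$ of size bounded by a fixed polynomial in $n$ and $|A|$ such that, once a Maltsev operation $p$ of $\m a$ is available as a table, (i) membership of an arbitrary tuple of $A^n$ in $B$ can be decided in time polynomial in $n$ and $|A|$, and (ii) $\langle C\rangle=B$. Mayr shows that $B=\langle\vec a_1,\dots,\vec a_k\rangle$ always admits such a $C$; what pushes the problem into \compNP\ rather than merely \compEXPTIME\ is that $C$ can in addition be chosen so that each $\vec c\in C$ is the value, computed coordinatewise at $(\vec a_1,\dots,\vec a_k)$, of a $k$-ary circuit over $\m a$ (in the sense of Section~\ref{maltsev}) of polynomial size --- intuitively because the compact representation is assembled in a polynomially bounded number of rounds, each applying basic operations to tuples already produced.

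Given this, the algorithm witnessing that \smpun\ belongs to \compNP\ is: on input consisting of the tables of $\m a$, generators $\vec a_1,\dots,\vec a_k\in A^n$, and a target $\vec b\in A^n$, first locate, by inspecting the tables, a basic operation $p$ that is Maltsev (one exists by hypothesis). Nondeterministically guess a set $C$ of polynomially many tuples in $A^n$ together with, for each $\vec c\in C$, a polynomial-size $k$-ary circuit over $\m a$, and reject the guess unless every guessed circuit, evaluated coordinatewise at $(\vec a_1,\dots,\vec a_k)$, yields the associated $\vec c$; since circuit evaluation is linear in circuit size, this check runs in polynomial time, and it certifies $C\subseteq\langle\vec a_1,\dots,\vec a_k\rangle$. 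Next verify, in polynomial time using $p$ and the method of \cite{Mayr2012}, that $C\cup\{\vec a_1,\dots,\vec a_k\}$ is a compact representation of the subalgebra it generates; as its members already lie in $\langle\vec a_1,\dots,\vec a_k\rangle$ while it contains the generators, that subalgebra is precisely $\langle\vec a_1,\dots,\vec a_k\rangle$. Finally, run the polynomial-time membership test of property~(i) to decide whether $\vec b\in\langle\vec a_1,\dots,\vec a_k\rangle$ and answer accordingly. By property~(ii) and Mayr's existence result, some guess passes all checks whenever the true answer is `yes', and whenever a guess passes, the final test is correct; all guesses and all verifications are polynomial in the total input size.

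The only real content --- and the step I expect to be the main obstacle --- is the ``careful reading'' itself: checking that in \cite{Mayr2012} the size of a compact representation, the size of the circuits that certify membership of its elements, and the running time of the compact-representation membership test are all polynomial in $|A|$ (not merely in $n$ and $k$ with the degree or the hidden constants depending on $\m a$), and that the construction uses $\m a$ only through its operation tables and a designated Maltsev operation among them, both of which the input of \smpun\ supplies. As the governing quantities are counted by forks (at most $n$ coordinate positions, at most $|A|^2$ possibilities per fork) and by a bounded number of applications of the Maltsev term, this should go through; but carrying out this bookkeeping --- rather than introducing any idea beyond \cite{Mayr2012} --- is what a full proof must do.
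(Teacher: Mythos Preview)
Your proposal is correct and takes essentially the same approach as the paper, whose entire treatment of this theorem is the remark that Mayr's verifier for $\smp(\m a)$ has runtime polynomial in $\|\m a\|$ as well as in the $\smp$ input, provided a Maltsev operation table is supplied among the basic operations. Your sketch in fact fills in far more detail than the paper does, and the step you single out as the real obstacle --- the ``careful reading'' that Mayr's bounds are uniform in $|A|$ --- is precisely what the paper leaves to the reader.
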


While this theorem is not explicitly stated in \cite{Mayr2012}, it can be seen
that the runtime of the verifier that Mayr constructs for the problem $\smp(\m
a)$, when $\m a$ has a Maltsev term, has polynomial dependence on the size of
$\m a$ in addition to the size of the input to $\smp(\m a)$.  We stress that
Mayr's verifier requires that the table for a Maltsev term of $\m a$ is given as part of
the description of $\m a$.


\begin{theorem}\label{NP} The decision problem \minority\ is in the class \compNP.
\end{theorem}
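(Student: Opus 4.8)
The plan is to assemble three ingredients that are already in place: the effective (table-producing) version of the Maltsev-term test for idempotent algebras (Theorem~\ref{maltsevcircuit} together with Corollary~\ref{maltsevterm}), the reduction of \minority{} to a subpower membership question (Proposition~\ref{min-instance}), and the fact that the \emph{uniform} subpower membership problem for algebras carrying a Maltsev operation lies in \compNP{} (Theorem~\ref{smpun}). Given an idempotent algebra $\m a$ as an instance of \minority{}, I would run the following nondeterministic procedure.

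First, apply the polynomial-time algorithm of Theorem~\ref{maltsevcircuit} and, as in the proof of Corollary~\ref{maltsevterm}, evaluate the resulting circuit to obtain a table. This deterministic step either reports that $\m a$ has no Maltsev term operation --- in which case, since every minority term is in particular a Maltsev term, we may correctly answer ``no'' --- or it returns the operation table of some Maltsev operation $p$ of $\m a$. Let $\m a^{+}$ be the algebra obtained from $\m a$ by adjoining $p$ to its list of basic operations. Since $p$ is a term operation of $\m a$, the clones of term operations of $\m a$ and of $\m a^{+}$ coincide, so $\m a$ has a minority term if and only if $\m a^{+}$ does; moreover $\|\m a^{+}\| = \|\m a\| + |A|^{3} \le \|\m a\| + \|\m a\|^{3}$, so $\m a^{+}$ has size polynomial in the size of $\m a$. (As $p$ satisfies $p(x,x,x)\approx x$, the algebra $\m a^{+}$ is again idempotent, though we do not need this.)

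Next, form the instance $\min(\m a^{+})$ of the subpower membership problem for $\m a^{+}$ described just before Proposition~\ref{min-instance}. By that proposition, $\m a^{+}$ has a minority term if and only if $\min(\m a^{+})$ is a ``yes'' instance, and the parameters of $\min(\m a^{+})$ (namely $k = 3$, arity $n = 3|A|^{2} - 2|A|$, and generators together with target lying in $A^{n}$) are polynomial in $\|\m a\|$. Crucially, $\m a^{+}$ has a Maltsev operation listed among its basic operations, so the description of $\m a^{+}$ together with $\min(\m a^{+})$ is a legitimate instance of \smpun{}, of size polynomial in $\|\m a\|$. Finally, run on this instance the \compNP{} verifier for \smpun{} supplied by Theorem~\ref{smpun} and output its answer. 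Since all the preprocessing is deterministic and polynomial-time and the only source of nondeterminism is the \smpun{} verifier, whose running time is polynomial in the size of its input and hence in $\|\m a\|$, this shows that \minority{} is in \compNP{}.

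There is no deep obstacle here; the only point requiring care is the ``plumbing'', namely that the reduction must hand the subpower membership solver an algebra that \emph{already carries} a Maltsev operation. This is exactly why one needs the table-producing form of the Maltsev algorithm (Corollary~\ref{maltsevterm}) rather than merely a decision procedure, and why one needs the uniform strengthening of Mayr's theorem (Theorem~\ref{smpun}), in which the algebra is part of the input, rather than its version for a fixed algebra.
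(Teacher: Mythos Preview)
Your proof is correct and follows essentially the same route as the paper: use Corollary~\ref{maltsevterm} to either reject or produce a Maltsev table, adjoin that operation to~$\m a$, and hand the resulting algebra together with the instance $\min(\m a)$ to the \compNP{} verifier for \smpun{} from Theorem~\ref{smpun}. The only cosmetic difference is that the paper phrases this as a polynomial-time many-one reduction to \smpun{} (mapping the no-Maltsev case to a fixed ``no'' instance), whereas you describe the equivalent nondeterministic procedure directly.
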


\begin{proof}
To prove this theorem, we provide a polynomial reduction $f$ of \minority\ to \smpun.  By Theorem~\ref{smpun}, this will suffice.
Let $\m a$ be an instance of \minority, i.e., a~finite
  idempotent algebra that has at least one operation.

  We first check, using the polynomial-time
  algorithm from Corollary~\ref{maltsevterm}, to see if $\m a$ has a~Maltsev
  term.  If it does not, then $\m a$ will not have a~minority term, and in this case we
  set $f(\m a)$ to be some fixed `no' instance of \smpun.  Otherwise, we augment the list of basic operations of $\m a$ by
  adding the Maltsev operation on $A$ that the algorithm produced.  Denote
  the resulting (idempotent) algebra by $\m a'$ and note that $\m a'$ can be constructed from $\m a$ by a polynomial-time algorithm.
   Also, note that $\m a'$ is term equivalent to
  $\m a$ and so the subpower membership problem is the same for both
  algebras.

  If we set $f(\m a)$ to be the instance of \smpun\ that consists of  the~list of tables of basic operations of~$\m A'$ along with
  $\min(\m a)$ then we have, by Proposition~\ref{min-instance}, that $f(\m a)$ is a `yes' instance of \smpun\ if and only if $\m a$ has a minority term.  Since the construction of $f(\m a)$ can be carried out by a procedure whose runtime can be bounded by a polynomial in $\|\m a\|$, we have produced a polynomial reduction of  \minority\ to \smpun, as required.
 \end{proof}

\section{Conclusion}

While Theorem~\ref{NP} establishes that testing for a~minority term for finite
idempotent algebras is not as hard as it could be, the true complexity of this
decision problem is still open.  Our proof of this theorem closely ties the
complexity of {\minority} to the complexity of the subpower membership problem
for finite Maltsev algebras and specifically to the problem \smpun. Thus any progress on determining the complexity of
$\smp(\m a)$ for finite Maltsev algebras may have a~bearing on the complexity
of {\minority}.
%
There has certainly been progress on the algorithmic side of $\smp$;
a~major recent paper has shown in particular that $\smp(\m a)$ is tractable for
$\m a$ with cube term operations (of which a Maltsev term operation is
a~special case) as long as $\m a$ generates a residually small
variety~\cite{BMS18} (the statement from the paper is actually stronger than
this, allowing multiple algebras in place of $\m a$).

In Section~\ref{join} we introduced the notion of a~minority-majority term and
showed that if testing for such a~term for finite idempotent algebras could be
done by a~polynomial-time algorithm, then \minority\ would lie in the
complexity class \compP. This is why we conclude our paper with a~question
about deciding minority-majority terms.

\begin{open-problem*}
  What is the complexity of deciding if a~finite idempotent algebra has
  a~minority-majority term?
\end{open-problem*}

%

\bibliography{minority}

\end{document}